\def\bg{\bigg}
\def\({\bg(}
\def\){\bg)}
\def\t{\text}
\def\ls{\leqslant}
\def\1{{\bf 1}}
\def\pmod #1{\ ({\rm{mod}}\ #1)}
\def\<{\langle}
\def\>{\rangle}
\theoremstyle{plain}
\newtheorem{theorem}{Theorem}[section]
\newtheorem{lemma}{Lemma}
\newtheorem{corollary}{Corollary}
\theoremstyle{definition}
\theoremstyle{remark}
\begin{document}
	\medskip
	
	\title[A variant of some cyclotomic matrices involving trinomial coefficients]
	{A variant of some cyclotomic matrices involving trinomial coefficients}

	\author{Yu-Bo Li}
	\address {(Yu-Bo Li) School of Science, Nanjing University of Posts and Telecommunications, Nanjing 210023, People's Republic of China}
	\email{lybmath2022@163.com}
	
	\author{Ning-Liu Wei}
	\address {(Ning-Liu Wei) School of Science, Nanjing University of Posts and Telecommunications, Nanjing 210023, People's Republic of China}
	\email{weiningliu6@163.com}

	\keywords{Central Trinomial Coefficients, Finite Fields, Determinants.
		\newline \indent 2020 {\it Mathematics Subject Classification}. Primary 05A19, 11C20; Secondary 15A18, 15B57, 33B10.}
	
	\begin{abstract}
		In this paper, by using the theory of circulant matrices we study some matrices over finite fields which involve the quadratic character and trinomial coefficients. 
	\end{abstract}
	\maketitle
	
	\section{Introduction}
	\setcounter{lemma}{0}
	\setcounter{theorem}{0}
	\setcounter{equation}{0}
	\setcounter{conjecture}{0}
	\setcounter{remark}{0}
	\setcounter{corollary}{0}
	
Let $p$ be an odd prime and let $(\cdot/p)$ be the Legendre symbol. The study of the matrices involving Legendre symbols can trace back to the works of Lehmer \cite{Lehmer} and Carlitz \cite{carlitz}. For example, Carlitz \cite{carlitz} initiated the study of the matrix 
$$C_p=\left[\left(\frac{j-i}{p}\right)\right]_{1\le i,j\le p-1}.$$
Carlitz \cite[Thm. 4 (4.9)]{carlitz} proved that the characteristic polynomial of $C_p$ is 
$$f_p(t)=\left(t^2-(-1)^{\frac{p-1}{2}}p\right)^{\frac{p-3}{2}}\left(t^2-(-1)^{\frac{p-1}{2}}\right).$$
	
Later Chapman \cite{chapman,evil-det} and Vsemirnov \cite{V1,V2} investigated many variants of Carlitz's matrix $C_p$. In particular, using sophisticated matrix decompositions Vsemirnov \cite{V1,V2} confirmed Chapman's ``evil" determinant conjecture which says that 
$$\det\left[\left(\frac{j-i}{p}\right)\right]_{1\le i,j\le \frac{p+1}{2}}=\begin{cases}
	-a_p & \mbox{if}\ p\equiv 1\pmod4,\\
	1    & \mbox{if}\ p\equiv 3\pmod4.
\end{cases}
$$
The number $a_p$ is defined by the following equality
$$\varepsilon_p^{(2-(\frac{2}{p}))h_p}=a_p+b_p\sqrt{p},\ a_p,b_p\in\mathbb{Q},$$
where $\varepsilon_p>1$ and $h_p$ denote the fundamental unit and class number
of the real quadratic field $\mathbb{Q}(\sqrt{p})$. 

Recently, Sun \cite{S19} further studied some variants of Carlitz's matrix $C_p$. For example, Sun \cite[Thm. 1.2]{S19} showed that 
$$-\det\left[\left(\frac{i^2+j^2}{p}\right)\right]_{1\le i,j\le p-1}$$
is always a quadratic residue modulo $p$. Along this line, for any integers $c,d$, the arithmetic properties of the matrix 
$$\left[\left(\frac{i^2+cij+dj^2}{p}\right)\right]_{1\le i,j\le p-1}$$
were extensively studied. Readers may refer to \cite{DPS,S19,WCR,W21,WSW} for details on this topic.

On the other hand, in the same paper Sun posed a conjecture (see \cite[Remark 1.3]{S19}) which states that 
$$2\det\left[\frac{1}{i^2-ij+j^2}\right]_{1\le i,j\le p-1}$$
is a quadratic residue modulo $p$ whenever $p\equiv2\pmod3$ is an odd prime. 
This conjecture was later proved by Wu, She and Ni \cite{WSN}. 

Also, Let 
$$D_p=\left[(i^2+j^2)\left(\frac{i^2+j^2}{p}\right)\right]_{1\le i,j\le (p-1/)2}.$$
Recently, Wu, She and Wang \cite{WSW} proved a conjecture posed by Sun which states that 
$$\left(\frac{D_p}{p}\right)=\begin{cases}
	1 & \mbox{if}\ p\equiv1\pmod4,\\
	(-1)^{\frac{h(-p)-1}{2}} & \mbox{if}\ p\equiv 3\pmod4,
\end{cases}$$
where $h(-p)$ is the class number of $\mathbb{Q}(\sqrt{-p})$. 

Now let $\mathbb{F}_q=\{0,a_1,\cdots,a_{q-1}\}$ be the finite field with $q$ elements, where $q$ is an odd prime power. Also, let $\chi$ be the unique quadratic multiplicative character of $\mathbb{F}_q$, i.e., 
$$\chi(x)=\begin{cases}
	0  & \mbox{if}\ x=0,\\
	1  & \mbox{if}\ x\ \text{is a nonzero square},\\
	-1 & \mbox{otherwise}.
\end{cases}$$

Motivated by the above results, in this paper, we shall study the following matrix over $\mathbb{F}_q$:
\begin{equation}\label{Eq. def of Wq}
	S_q:=\left[(a_i^2+a_ia_j+a_j^2)\chi(a_i^2+a_ia_j+a_j^2)\right]_{1\le i,j\le q-1}.
\end{equation}

Let $n$ be a non-negative integer. The central trinomial coefficient $T_n$ is defined to be the coefficient of $x^n$ in the polynomial $(x^2+x+1)^n$. Equivalently, $T_n$ is the constant term of $(x+1+x^{-1})^n$. Now we state our main result. 

\begin{theorem}\label{Thm. A}
	Let $q$ be an odd prime power. Then there exists an element $u\in\mathbb{F}_q$ such that 
	$$\det S_q=T_{\frac{q+1}{2}}\cdot u^2.$$
\end{theorem}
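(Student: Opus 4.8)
The plan is to exploit the fact that $S_q$ is (up to reindexing) a circulant matrix over $\mathbb{F}_q$, and to compute $\det S_q$ as a product of values of an associated polynomial at the characters of the multiplicative group $\mathbb{F}_q^\times$. First I would observe that the entry $f(a_i,a_j):=(a_i^2+a_ia_j+a_j^2)\chi(a_i^2+a_ia_j+a_j^2)$ depends on $a_i,a_j$ only through the ratio $a_j/a_i$ together with a factor $a_i^2$: writing $g(x):=(1+x+x^2)\chi(1+x+x^2)$ we have $f(a_i,a_j)=a_i^2\,g(a_j/a_i)$. If we order the nonzero elements of $\mathbb{F}_q$ as $a_i=\rho^{i}$ for a fixed generator $\rho$ of $\mathbb{F}_q^\times$, then $S_q=\mathrm{diag}(\rho^{2i})\cdot C$ where $C=[\,g(\rho^{\,j-i})\,]_{1\le i,j\le q-1}$ is a circulant matrix of size $q-1$. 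Hence $\det S_q=\rho^{\,2(1+2+\cdots+(q-1))}\det C=\big(\rho^{\binom{q}{2}}\big)^{2}\det C$, so up to a square factor in $\mathbb{F}_q^\times$ it suffices to evaluate $\det C$.

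Next I would apply the standard diagonalization of circulant matrices: for each multiplicative character $\psi$ of $\mathbb{F}_q^\times$ (equivalently, each $(q-1)$-th root of unity, after fixing an isomorphism $\mathbb{F}_q^\times\cong\mathbb{Z}/(q-1)\mathbb{Z}$), the eigenvalue of $C$ is $\lambda_\psi=\sum_{x\in\mathbb{F}_q^\times} g(x)\,\psi(x)=\sum_{x\in\mathbb{F}_q^\times}(1+x+x^2)\chi(1+x+x^2)\psi(x)$. Thus $\det C=\prod_\psi \lambda_\psi$, the product over all $q-1$ characters. The key arithmetic input is to identify which $\lambda_\psi$ contribute a nonsquare and to pin down the "extra" factor. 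The product over a character $\psi$ and its inverse $\psi^{-1}$ is $\lambda_\psi\lambda_{\psi^{-1}}$; since $g$ is an even-type function in the sense that $g(1/x)=x^{-2}(1+x+x^2)\chi(x^{-2}(1+x+x^2))=x^{-2}g(x)\chi(x^{-2})=x^{-2}g(x)$ — wait, more care is needed here, but the upshot is that $\lambda_{\psi^{-1}}$ is closely related to $\lambda_\psi$ after a substitution $x\mapsto 1/x$, so pairs of conjugate characters contribute squares, and the whole determinant reduces modulo squares to the contribution of the real characters $\psi=\chi_{\mathrm{triv}}$ and $\psi=\chi$ (the quadratic character), i.e. to $\lambda_{\mathrm{triv}}\cdot\lambda_\chi$ times a square.

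The heart of the matter is then the evaluation of $\lambda_\chi=\sum_{x\in\mathbb{F}_q^\times}(1+x+x^2)\chi\!\big(x(1+x+x^2)\big)$ and $\lambda_{\mathrm{triv}}=\sum_{x\in\mathbb{F}_q^\times}(1+x+x^2)\chi(1+x+x^2)$, and connecting the answer to the central trinomial coefficient $T_{(q+1)/2}$. Here I would use the combinatorial description $T_n=$ constant term of $(x+1+x^{-1})^n$ together with a character-sum (Jacobi/Jacobsthal-sum) evaluation: sums of the form $\sum_x \chi(x)\chi(1+x+x^2)$ and their weighted versions can be expanded via the binomial theorem applied to $(1+x+x^2)^{(q-1)/2}\equiv\chi(1+x+x^2)\pmod p$ (using Euler's criterion in $\mathbb{F}_q$), and then $\sum_{x\in\mathbb{F}_q^\times}x^k$ vanishes unless $(q-1)\mid k$, which isolates exactly the central-ish coefficients of $(1+x+x^2)^{(q-1)/2}$ or $(1+x+x^2)^{(q+1)/2}$ — these are, up to index-shifting, the trinomial coefficients, and the coefficient that survives is $T_{(q+1)/2}$ (possibly times a small explicit constant and a sign, which I would absorb once the bookkeeping is done). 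I expect the main obstacle to be precisely this last step: getting the index arithmetic exactly right so that the surviving coefficient is $T_{(q+1)/2}$ rather than a neighboring trinomial coefficient, and checking that all the other stray factors (powers of $\rho$, the constant from $\lambda_{\mathrm{triv}}$, signs $(-1)^{(q-1)/2}$, and the contribution of $x=0$ versus $x\in\mathbb{F}_q^\times$) are squares in $\mathbb{F}_q$ so that they can be swept into $u^2$. A secondary technical point is justifying the circulant diagonalization over $\mathbb{F}_q$ itself (rather than over an extension containing the requisite roots of unity): this is fine because $\mathbb{F}_q^\times$ has order $q-1$ and $\mathbb{F}_q$ already contains all $(q-1)$-th roots of unity, so the Fourier/character machinery works internally, and the determinant — a priori an element of $\mathbb{F}_q$ — is literally the product of the $\lambda_\psi\in\mathbb{F}_q$.
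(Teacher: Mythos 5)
Your overall plan (pull out the diagonal of squares, view the remaining matrix as a circulant over $\mathbb{F}_q$, diagonalize it by the multiplicative characters of $\mathbb{F}_q^\times$, and argue that all but two eigenvalues pair off into squares) is in substance the paper's route: its Lemma 2.1 is exactly this pairing argument, packaged for circulants satisfying $t_i=t_{m-i}$. The genuine gap is the step you yourself flagged with ``more care is needed'': since $g(1/x)=x^{-2}g(x)$ and \emph{not} $g(x)$, the substitution $x\mapsto 1/x$ gives $\lambda_{\psi^{-1}}=\lambda_{\omega^{-2}\psi}$, where $\omega$ denotes the character $x\mapsto x$. So the involution that produces equal eigenvalues is $\psi\mapsto\omega^{-2}\psi^{-1}$, whose fixed characters are $\omega^{-1}$ and $\omega^{-1}\chi$ --- not the trivial character and $\chi$. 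Consequently $\det C$ reduces modulo squares to $\bigl(\sum_{x\neq0}x^{-1}g(x)\bigr)\bigl(\sum_{x\neq0}\chi(x)x^{-1}g(x)\bigr)=(-1)\cdot\bigl(-T_{\frac{q+1}{2}}\bigr)$, whereas your proposed reduction to $\lambda_{\mathrm{triv}}\lambda_{\chi}$ is false: expanding $(x^2+x+1)^{\frac{q+1}{2}}$ and using $\sum_{x\neq0}x^k=-1$ iff $(q-1)\mid k$, one finds (for $q>5$) $\lambda_{\mathrm{triv}}=-\tfrac{11}{8}$ and $\lambda_{\chi}=-\binom{\frac{q+1}{2}}{1}_2$, i.e.\ the surviving coefficient is a neighboring trinomial coefficient, not the central one --- exactly the index-shift danger you anticipated. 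This is not repairable by bookkeeping: for $q=7$ one has $\lambda_{\mathrm{triv}}\equiv3$, $\lambda_{\chi}\equiv5$, so $\lambda_{\mathrm{triv}}\lambda_{\chi}\equiv1\pmod 7$ and your reduction would force $\det S_7$ to be a square mod $7$, while in fact $\det S_7\equiv5\pmod 7$ (a nonsquare; see the paper's Theorem 1.3, or compute the two genuinely unpaired eigenvalues $6$ and $2$ directly).

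The fix is precisely the normalization the paper performs before invoking its Lemma 2.1: multiply row $i$ by $g^{i}$ and column $j$ by $g^{-j}$ (this scaling has determinant $1$), i.e.\ replace the symbol $g(x)$ by $x^{-1}g(x)=x^{-1}(x^2+x+1)^{\frac{q+1}{2}}$. The twisted symbol satisfies $t_i=t_{m-i}$, so your pairing $\psi\leftrightarrow\psi^{-1}$ then works verbatim, the two unpaired eigenvalues are the ones at $\zeta=\pm1$, namely $\sum_{x\neq0}x^{-1}(x^2+x+1)^{\frac{q+1}{2}}=-1$ and $\sum_{x\neq0}\bigl(x+1+\tfrac1x\bigr)^{\frac{q+1}{2}}=-T_{\frac{q+1}{2}}$, and the theorem follows. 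Your remark that all $(q-1)$-th roots of unity already lie in $\mathbb{F}_q$, so the diagonalization and the resulting square live in $\mathbb{F}_q$ itself, is correct and is indeed needed; with the twist inserted, your argument coincides with the paper's proof, but as written the identification of the surviving characters, and hence the appearance of $T_{\frac{q+1}{2}}$, fails.
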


As a consequence of Theorem \ref{Thm. A}, we have the following result.

\begin{corollary}\label{Corollary A}
	Let $p$ be an odd prime. Suppose $p\nmid \det S_p$. Then 
	$$\left(\frac{\det S_p}{p}\right)=\left(\frac{T_{\frac{p+1}{2}}}{p}\right).$$
\end{corollary}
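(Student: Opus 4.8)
To prove Theorem \ref{Thm. A} (Corollary \ref{Corollary A} then follows at once, since $p\nmid\det S_p$ forces $p\nmid u$ and hence $(u^2/p)=1$), the plan is to exploit the cyclic structure of $\mathbb{F}_q^*$ to rewrite $S_q$ as a diagonal matrix times a circulant matrix, and then to diagonalize that circulant over $\mathbb{F}_q$ itself. Fix a generator $g$ of $\mathbb{F}_q^*$ and relabel $a_i=g^i$ for $1\le i\le q-1$; since permuting the $a_i$ conjugates $S_q$ by a permutation matrix, this leaves $\det S_q$ unchanged. Using $\chi(g^{2i})=1$ together with the identity $t\chi(t)=t^{(q+1)/2}$ (valid for every $t\in\mathbb{F}_q$), the $(i,j)$ entry of $S_q$ equals $g^{2i}c_{j-i}$, where $c_k:=(1+g^k+g^{2k})^{(q+1)/2}$ depends only on $k\bmod(q-1)$. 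Hence $S_q=\mathrm{diag}(g^2,g^4,\dots,g^{2(q-1)})\cdot C$, where $C=\big[\,c_{j-i\bmod(q-1)}\,\big]_{1\le i,j\le q-1}$ is circulant, and since $\prod_{i=1}^{q-1}g^{2i}=g^{q(q-1)}=1$ we get $\det S_q=\det C$.

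Now set $n=q-1$. Because $\mathbb{F}_q^*$ is cyclic of order $n$ and $n$ is coprime to the characteristic of $\mathbb{F}_q$, the element $g$ is a primitive $n$-th root of unity in $\mathbb{F}_q$, so the usual diagonalization of a circulant matrix applies over $\mathbb{F}_q$: the eigenvalues of $C$ are $\lambda_m=\sum_{k=0}^{n-1}c_kg^{mk}$ for $0\le m\le n-1$, and $\det C=\prod_{m=0}^{n-1}\lambda_m$. The crucial point is a symmetry of these eigenvalues. From $1+g^{-k}+g^{-2k}=g^{-2k}(1+g^k+g^{2k})$ one gets $c_{-k}=g^{-2k}c_k$, and replacing $k$ by $-k$ in the defining sum then gives $\lambda_{-m}=\lambda_{m-2}$. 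Thus the involution $\sigma(j)=-j-2$ of $\mathbb{Z}/n\mathbb{Z}$ satisfies $\lambda_{\sigma(j)}=\lambda_j$, so grouping the factors of $\det C$ by $\sigma$-orbits produces an element $v\in\mathbb{F}_q$ with $\det C=v^2\prod_{\sigma(j)=j}\lambda_j$. The fixed points of $\sigma$ solve $2(j+1)\equiv0\pmod n$, namely $j=(q-3)/2$ and $j=q-2\equiv-1$; hence $\det S_q=v^2\,\lambda_{(q-3)/2}\,\lambda_{-1}$, and everything reduces to evaluating these two eigenvalues modulo squares.

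For the last step I would turn $\lambda_{-1}$ and $\lambda_{(q-3)/2}$ into character sums over $\mathbb{F}_q^*$: substituting $x=g^k$ and using $\chi(g^k)=(-1)^k$ and $g^{(q-1)/2}=-1$ yields $\lambda_{-1}=\sum_{x\in\mathbb{F}_q^*}x^{-1}(1+x+x^2)^{(q+1)/2}$ and $\lambda_{(q-3)/2}=\sum_{x\in\mathbb{F}_q^*}\chi(x)\,x^{-1}(1+x+x^2)^{(q+1)/2}$. Writing $(1+x+x^2)^{(q+1)/2}=\sum_l t_l x^l$ — a palindromic polynomial of degree $q+1$ whose central coefficient $t_{(q+1)/2}$ equals $T_{(q+1)/2}$ — and using $\sum_{x\in\mathbb{F}_q^*}x^a=-1$ or $0$ according as $(q-1)\mid a$ or not (and $\sum_{x\in\mathbb{F}_q^*}\chi(x)x^a=\sum_{x\in\mathbb{F}_q^*}x^{a+(q-1)/2}$), these sums collapse for $q>3$, leaving $\lambda_{-1}=-(t_1+t_q)$ and $\lambda_{(q-3)/2}=-t_{(q+1)/2}=-T_{(q+1)/2}$. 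Since palindromy gives $t_q=t_1=(q+1)/2=2^{-1}$ in $\mathbb{F}_q$, we obtain $\lambda_{-1}=-2t_1=-1$, and therefore $\det S_q=v^2\cdot(-T_{(q+1)/2})\cdot(-1)=T_{(q+1)/2}\cdot u^2$ for a suitable $u\in\mathbb{F}_q$ (one may take $u=0$ in the degenerate case where some $\lambda_m$ vanishes). I expect the step demanding the most care to be exactly this last one — tracking which exponents $l$ in the range $0\le l\le q+1$ actually contribute to the two character sums; the governing degree inequality holds precisely for $q>3$, so the single small case $q=3$ has to be handled by a direct computation.
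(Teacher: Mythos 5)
Your derivation of the corollary from Theorem \ref{Thm. A} is exactly the intended one: the paper treats it as immediate, since $\det S_p=T_{\frac{p+1}{2}}\cdot u^2$ in $\mathbb{F}_p$ and $p\nmid\det S_p$ forces $u\not\equiv0\pmod p$, so the two Legendre symbols agree. Your proof of Theorem \ref{Thm. A} itself, however, takes a genuinely different route. The paper divides the $i$-th row by $a_i^2$ to obtain a circulant with the palindromic symmetry $t_i=t_{q-1-i}$ and then invokes Wu's result (Lemma \ref{Lem. the key lemma}) as a black box to produce the square factor together with the two sums $\sum_i t_i$ and $\sum_i(-1)^it_i$. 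You keep the non-symmetric circulant $[c_{j-i}]$, diagonalize it over $\mathbb{F}_q$ itself using $g$ as a primitive $(q-1)$-th root of unity, and exploit $c_{-k}=g^{-2k}c_k$ to get the eigenvalue symmetry $\lambda_{-m}=\lambda_{m-2}$; pairing the $\sigma$-orbits leaves exactly the two fixed eigenvalues $\lambda_{-1}$ and $\lambda_{(q-3)/2}$, and these coincide with the paper's two sums. So your argument is self-contained where the paper cites Wu's lemma, at the cost of redoing the circulant eigenvalue theory; your evaluation $\lambda_{-1}=-(t_1+t_q)=-2t_1=-1$ via palindromy also replaces the paper's completing-the-square computation, and your evaluation of $\lambda_{(q-3)/2}$ matches the paper's use of Lemma \ref{Lem. sum}. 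For $q>3$ everything you wrote checks out.

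The one caveat is the case you deferred. For $q=3$ the collapse of the sum giving $\lambda_{(q-3)/2}$ genuinely fails (the exponent range then admits extra multiples of $q-1$), and the direct computation you propose does not rescue the statement: over $\mathbb{F}_3$ one finds $S_3=\left(\begin{smallmatrix}0&1\\1&0\end{smallmatrix}\right)$, so $\det S_3=-1\not\equiv0\pmod 3$, while $T_2=3\equiv0\pmod 3$; hence Theorem \ref{Thm. A} fails at $q=3$, and the corollary fails at $p=3$, where the hypothesis $3\nmid\det S_3$ holds but $\left(\frac{\det S_3}{3}\right)=-1\neq\left(\frac{T_2}{3}\right)=0$. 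This is a defect of the statement rather than of your method --- the paper's own proof has the same hidden restriction, since its appeal to Lemma \ref{Lem. sum} for $\sum_i(-1)^it_i=-T_{\frac{q+1}{2}}$ silently requires $\frac{q+1}{2}<q-1$ --- but you should state your conclusion for $p>3$ (respectively $q>3$) rather than expect the $q=3$ check to confirm it.
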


Next we shall give a necessary and sufficient condition for $S_q$ to be singular. Let $n$ be a non-negative integer. Then the trinomial coefficients $\binom{n}{k}_2$ is defined by 
$$\left(x+\frac{1}{x}+1\right)^n=\sum_{k=-n}^{n}\binom{n}{k}_2x^k.$$
Clearly $\binom{n}{0}_2=T_n$. Now we state our last result.
\begin{theorem}\label{Thm. B}
	Let $\mathbb{F}_q$ be the finite field with $q>5$ and $(q,22)=1$. Then 
	$$\det S_q=\frac{121}{64}\cdot T_{\frac{q+1}{2}}\cdot\prod_{k=1}^{(p-5)/2}\binom{\frac{q+1}{2}}{k}_2^2\in\mathbb{F}_p,$$
	where $p$ is the characteristic of $\mathbb{F}_q$. 
	Also, $S_q$ is a singular matrix over $\mathbb{F}_q$ if and only if 
	$$\binom{\frac{q+1}{2}}{k}_2\equiv 0\pmod p$$
	for some $0\le k\le (q-5)/2$.
\end{theorem}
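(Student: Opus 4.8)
\smallskip
\noindent\textbf{Proof proposal.}
The plan is to exhibit $S_q$ as a diagonal matrix times a circulant matrix and to read off $\det S_q$ from the eigenvalues of the circulant, which turn out to be reductions modulo $p$ of short sums of trinomial coefficients. Since conjugating $S_q$ by a permutation matrix does not change $\det S_q$, I may list the nonzero elements of $\mathbb{F}_q$ as $a_i=g^{i-1}$ $(1\le i\le q-1)$ for a fixed generator $g$ of $\mathbb{F}_q^*$. Put $\psi(t):=(1+t+t^2)\chi(1+t+t^2)$. Since $g^{2(i-1)}$ is a nonzero square, the $(i,j)$ entry of $S_q$ equals $g^{2(i-1)}\psi(g^{j-i})$, so $S_q=D\,C$, where $D=\operatorname{diag}(g^0,g^2,\dots,g^{2(q-2)})$ and $C=[\psi(g^{j-i})]_{1\le i,j\le q-1}$ is a circulant matrix with first row $(\psi(g^0),\psi(g^1),\dots,\psi(g^{q-2}))$. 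As $\det D=g^{(q-1)(q-2)}=1$, we have $\det S_q=\det C$; and since $p\nmid q-1$ while $\mathbb{F}_q^*=\{1,g,\dots,g^{q-2}\}$ is precisely the group of $(q-1)$-st roots of unity in $\mathbb{F}_q$, the standard diagonalization of a circulant matrix over a field carrying the relevant roots of unity gives
\[
\det S_q=\prod_{j=0}^{q-2}\lambda_j,\qquad
\lambda_j:=\sum_{k=0}^{q-2}\psi(g^k)\,g^{jk}=\sum_{t\in\mathbb{F}_q^*}\psi(t)\,t^{j}.
\]
(This circulant reduction is presumably already the mechanism behind Theorem \ref{Thm. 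A}.)

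Next I would compute each $\lambda_j$. For $t\ne0$ one has $\chi(t)=t^{(q-1)/2}$ in $\mathbb{F}_q$, hence $\psi(t)=(1+t+t^2)^m$ with $m:=\frac{q+1}{2}$ (this identity also holds when $1+t+t^2=0$). Combining the polynomial identity
\[
(1+t+t^2)^m=t^m\big(t+1+t^{-1}\big)^m=\sum_{l=0}^{q+1}\binom{m}{l-m}_2\,t^{l},
\]
which is valid over $\mathbb{F}_q$, with $\sum_{t\in\mathbb{F}_q^*}t^s=-1$ when $(q-1)\mid s$ and $0$ otherwise, we get $\lambda_j=-\sum\binom{m}{l-m}_2$, the sum running over $0\le l\le q+1$ with $l+j\equiv0\pmod{q-1}$. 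For $0\le j\le q-2$ this congruence forces $l+j\in\{0,\,q-1,\,2q-2\}$; solving for $l$, using $q-1-m=m-2$, the symmetry $\binom{m}{k}_2=\binom{m}{-k}_2$, and the elementary evaluations $\binom{m}{m-1}_2=m$ and $\binom{m}{m-2}_2=\binom{m}{2}+m=\binom{m+1}{2}$ (the coefficients of $t$ and $t^2$ in $(1+t+t^2)^m$, via palindromy), one finds, for $q>5$,
\[
\lambda_j=-\binom{m}{m-2-j}_2\ \ (1\le j\le q-4),\qquad
\lambda_0=\lambda_{q-3}=-\Big(1+\binom{m+1}{2}\Big),\qquad
\lambda_{q-2}=-2m.
\]

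Finally I would multiply the $\lambda_j$. As $j$ runs through $\{1,\dots,q-4\}$ the index $k:=m-2-j$ runs bijectively through $\{-(m-3),\dots,m-3\}$, so, using $\binom{m}{0}_2=T_m$, the symmetry of $\binom{m}{k}_2$, and $q-4=2m-5$,
\[
\prod_{j=1}^{q-4}\lambda_j=(-1)^{2m-5}\,T_m\prod_{k=1}^{(q-5)/2}\binom{m}{k}_2^2=-\,T_m\prod_{k=1}^{(q-5)/2}\binom{m}{k}_2^2.
\]
Multiplying this by $\lambda_0\lambda_{q-3}\lambda_{q-2}=\big(1+\binom{m+1}{2}\big)^2(-2m)$ and reducing modulo $p$ (here $2m=q+1\equiv1$, so $m\equiv\frac12$, whence $\binom{m+1}{2}=\frac{m(m+1)}{2}\equiv\frac12\cdot\frac12\cdot\frac32=\frac38$ and $1+\binom{m+1}{2}\equiv\frac{11}{8}$, using $p\neq2$), one gets
\[
\det S_q=\frac{121}{64}\,T_{\frac{q+1}{2}}\prod_{k=1}^{(q-5)/2}\binom{\frac{q+1}{2}}{k}_2^2\ \in\ \mathbb{F}_p,
\]
the containment in $\mathbb{F}_p$ being clear since each $\lambda_j$ is the reduction of a rational integer. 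For singularity, $\mathbb{F}_q$ is a field, so $\det S_q=\prod_j\lambda_j=0$ iff some $\lambda_j=0$; the three exceptional values reduce to $-2m\equiv-1$ and $-(1+\binom{m+1}{2})\equiv-\frac{11}{8}$, which are nonzero precisely because $p\nmid22$, so $S_q$ is singular over $\mathbb{F}_q$ iff $\binom{m}{k}_2\equiv0\pmod p$ for some $k$ with $|k|\le m-3$, equivalently (by symmetry, with $k=0$ giving $T_m$) for some $0\le k\le\frac{q-5}{2}$.

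The bulk of the work sits in the second paragraph above: for each residue of $j$ modulo $q-1$, pinning down exactly which coefficients $\binom{m}{l-m}_2$ survive, and correctly isolating the three boundary indices $j\in\{0,\,q-3,\,q-2\}$ together with the small-$k$ trinomial identities $\binom{m}{m-1}_2=m$ and $\binom{m}{m-2}_2=\binom{m+1}{2}$. This step is routine but error-prone; there is no conceptual obstacle once $\det S_q=\prod_j\lambda_j$ is in hand, and the result is manifestly consistent with Theorem \ref{Thm. A}, since $\frac{121}{64}=\big(\frac{11}{8}\big)^2$.
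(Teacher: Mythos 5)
Your proposal is correct, but it follows a genuinely different route from the paper's proof of Theorem \ref{Thm. B}. The paper first folds $(a^2+a+1)^{\frac{q+1}{2}}$ into a polynomial $f$ of degree $q-2$ using $a^{q-1}=1$ (Lemma \ref{Lem. values of polynomial}, which is where the coefficients $\frac{11}{8}$ arise), writes $\det S_q=\det[f(a_j/a_i)]$, and then invokes Krattenthaler's evaluation of $\det[P(X_iY_j)]$ (Lemma \ref{Lem. K}) together with the identity $\prod_{1\le i<j\le q-1}(a_j-a_i)(a_j^{-1}-a_i^{-1})=1$ from \cite{WSN}, so that $\det S_q$ is simply the product of the coefficients of $f$. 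You instead retain the circulant structure underlying Theorem \ref{Thm. A} and diagonalize it outright over $\mathbb{F}_q$ (legitimate, as you note, since $\mathbb{F}_q^{*}$ is exactly the set of $(q-1)$-st roots of unity and $p\nmid q-1$), obtaining $\det S_q=\prod_j\lambda_j$ with each $\lambda_j$ a power sum that the evaluation $\sum_{t\neq 0}t^s$ collapses to at most two trinomial coefficients; your boundary analysis $\lambda_{q-2}=-2m\equiv-1$, $\lambda_0=\lambda_{q-3}\equiv-\frac{11}{8}$, and $\lambda_j=-\binom{m}{m-2-j}_2$ for $1\le j\le q-4$ (with $m=\frac{q+1}{2}$) is exactly right and reproduces the same folded coefficients $\frac{11}{8},\,1,\,\frac{11}{8}$ that appear in the paper's $f$, so the final products agree. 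Your route is self-contained (no Krattenthaler lemma, no appeal to the \cite{WSN} product identity) and yields finer information, namely the full spectrum of the circulant factor, which makes the singularity criterion immediate and identifies exactly which eigenvalue vanishes; the paper's route, once its two cited lemmas are granted, has shorter bookkeeping and avoids the three exceptional indices $j\in\{0,q-3,q-2\}$ that you rightly flag as the delicate part. Incidentally, your upper limit $(q-5)/2$ in the product is the correct one (the paper's displayed $(p-5)/2$ is a typo), and you also use the correct form of Lemma \ref{Lem. sum}, whose statement in the paper should read $q-1\mid k$ rather than $p-1\mid k$.
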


We will prove our main results in Section 2 and Section 3 respectively. 

\section{Proof of Theorem \ref{Thm. A}}

We first introduce the definition of the circulant matrices. Let $R$ be a commutative ring. Let $m$ be a positive integer and $t_0,t_1,\ldots,t_{m-1}\in R$. We define the circulant matrix $C(t_0,\ldots,t_{m-1})$ to be an $m\times m$ matrix whose ($i$-$j$)-entry is $t_{j-i}$ where the indices are cyclic module $m$. 
Wu \cite[Lemma 3.4]{W21} obtained the following result.

\begin{lemma} \label{Lem. the key lemma}
	Let $R$ be a commutative ring. Let $m$ be a positive even integer. Let $t_0,t_1,\ldots,t_{m-1}\in R$ such that
	\begin{align*}
		t_i=t_{m-i} \ \ \t{for each $1\ls i\ls m-1$.}
	\end{align*}
	Then there exists an element $u\in R$ such that 
	$$
	\det C(t_0,\ldots,t_{m-1})=\left(\sum_{i=0}^{m-1}t_i\right)\left(\sum_{i=0}^{m-1}(-1)^it_i\right)\cdot u^2.
	$$
\end{lemma}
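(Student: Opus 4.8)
The plan is to diagonalize the circulant over a ring containing a primitive $m$-th root of unity, read off the two stated factors together with a perfect square, and then descend that square root back to $R$ by a universal-coefficient argument. Put $f(x)=\sum_{k=0}^{m-1}t_kx^k$ and let $\omega$ be a primitive $m$-th root of unity. It is classical that the eigenvalues of $C(t_0,\ldots,t_{m-1})$ are $\la_j=f(\omega^j)=\sum_{k=0}^{m-1}t_k\omega^{jk}$ for $0\ls j\ls m-1$, whence
\[
\det C(t_0,\ldots,t_{m-1})=\prod_{j=0}^{m-1}\la_j.
\]
Because $m$ is even we have $\omega^{m/2}=-1$, so the two distinguished eigenvalues are precisely
\[
\la_0=\sum_{i=0}^{m-1}t_i \qquad\text{and}\qquad \la_{m/2}=\sum_{i=0}^{m-1}(-1)^it_i,
\]
the two sums occurring in the statement.

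Next I would pair off the remaining eigenvalues using the symmetry hypothesis. Reindexing $k\mapsto m-k$ and invoking $t_{m-k}=t_k$ gives $\la_{m-j}=\sum_k t_{m-k}\omega^{jk}=\sum_k t_k\omega^{jk}=\la_j$ for all $j$. Hence the indices $\{1,\ldots,m-1\}\sm\{m/2\}$ fall into the $m/2-1$ conjugate pairs $\{j,m-j\}$, on each of which the eigenvalue is repeated, and therefore
\[
\det C(t_0,\ldots,t_{m-1})=\la_0\,\la_{m/2}\prod_{j=1}^{m/2-1}\la_j^2.
\]
This proves the identity with $u=\prod_{j=1}^{m/2-1}\la_j$, but a priori only over a ring large enough to contain $\omega$. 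The real content of the lemma is that $u$ can be taken inside $R$, and this is the step I expect to be the main obstacle.

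To overcome it I would argue universally. Let $s_0,s_1,\ldots,s_{m/2}$ be independent indeterminates, set $A=\Z[s_0,\ldots,s_{m/2}]$, and read the hypothesis as the convention $s_{m-i}:=s_i$; then $\det C$, $\la_0$, $\la_{m/2}$, and $v:=\prod_{j=1}^{m/2-1}\la_j$ all lie in $A[\omega]$. Each $\la_j$ is a $\Z[\omega]$-linear combination of the $s_i$ and hence integral over $A$, so $v$ is integral over $A$. On the other hand $\mathrm{Gal}(\Q(\omega)/\Q)$ acts through $\omega\mapsto\omega^a$ with $a\in(\Z/m\Z)^\times$; since $m$ is even every such $a$ is odd, so this action fixes the indices $0$ and $m/2$, sends $\la_j\mapsto\la_{aj}$, and permutes the conjugate pairs $\{j,m-j\}$ among themselves. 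As $\la_j=\la_{m-j}$, the product $v$ is fixed by the whole group and therefore lies in $\mathrm{Frac}(A)=\Q(s_0,\ldots,s_{m/2})$. Being integral over the polynomial ring $A$, which is a unique factorization domain and hence integrally closed in its fraction field, $v$ must already belong to $A$. Finally, for any commutative ring $R$ and any $t_0,\ldots,t_{m-1}\in R$ with $t_i=t_{m-i}$, the assignment $s_i\mapsto t_i$ defines a ring homomorphism $A\to R$, and applying it to the polynomial identity $\det C=\la_0\,\la_{m/2}\,v^2$ valid in $A$ gives the claim with $u$ the image of $v$. The diagonalization and the pairing are routine; the one delicate point is this descent of $v$ to $A$, i.e.\ combining Galois invariance with the integral closedness of $A$.
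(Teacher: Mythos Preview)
The paper does not actually prove this lemma; it merely quotes it from Wu \cite[Lemma 3.4]{W21}, so there is no in-paper argument to compare yours against.

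Your proof is correct. The eigenvalue factorization of a circulant and the pairing $\la_{m-j}=\la_j$ under the hypothesis $t_i=t_{m-i}$ are routine, and the key descent step is handled properly: working universally over $A=\Z[s_0,\ldots,s_{m/2}]$, the element $v=\prod_{j=1}^{m/2-1}\la_j$ lies in $A[\omega]$, is integral over $A$, and is fixed by every $\omega\mapsto\omega^a$ with $a\in(\Z/m\Z)^\times$ because multiplication by such $a$ permutes the pairs $\{j,m-j\}$ (here you use that $a$ is odd when $m$ is even, so $a\cdot(m/2)\equiv m/2$). One small precision: the Galois group you should invoke is $\mathrm{Gal}(K(\omega)/K)$ with $K=\mathrm{Frac}(A)$, not literally $\mathrm{Gal}(\Q(\omega)/\Q)$; but since $K$ is purely transcendental over $\Q$ these groups coincide, so the slip is harmless. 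The conclusion $v\in A$ then follows from integral closedness of the UFD $A$, and specialization $s_i\mapsto t_i$ finishes the argument for an arbitrary commutative ring $R$.
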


We also need the following known result. 
\begin{lemma}\label{Lem. sum}
	Let $k$ be an integer. Then 
	$$\sum_{x\in\mathbb{F}_q\setminus\{0\}}x^k=\begin{cases}
		-1 & \mbox{if}\ p-1\mid k,\\
		0  & \mbox{otherwise}.
	\end{cases}$$
\end{lemma}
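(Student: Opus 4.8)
The plan is to prove this standard power-sum identity by splitting on whether the exponent $k$ is divisible by the order of the multiplicative group $\mathbb{F}_q^\times$. Note that this order is $q-1$, so the natural divisibility condition is $q-1\mid k$; I will prove the identity in this form (for a proper prime power $q$ one cannot replace $q-1$ by $p-1$, as the case $q=4$, $k=1$ already shows). The two ingredients are that $x^{q-1}=1$ for every $x\in\mathbb{F}_q\setminus\{0\}$, and that multiplication by a fixed nonzero scalar permutes $\mathbb{F}_q\setminus\{0\}$.

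First I would treat the case $q-1\mid k$. Writing $k=(q-1)\ell$ with $\ell\gs0$, every nonzero $x$ satisfies $x^k=(x^{q-1})^\ell=1$, so the sum equals the number of nonzero elements of $\mathbb{F}_q$, namely $q-1$. Since $q$ is a power of the characteristic $p$ we have $q\equiv0\pmod p$, hence $q-1$ reduces to $-1$ in $\mathbb{F}_q$, giving the stated value.

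For the case $q-1\nmid k$, I would use a symmetry argument. Write $S=\sum_{x\in\mathbb{F}_q\setminus\{0\}}x^k$. Since $\mathbb{F}_q^\times$ is cyclic of order $q-1$ and $q-1\nmid k$, a generator $g$ of $\mathbb{F}_q^\times$ satisfies $g^k\neq1$. As $x$ ranges over $\mathbb{F}_q\setminus\{0\}$ so does $gx$, whence
$$g^kS=\sum_{x\in\mathbb{F}_q\setminus\{0\}}(gx)^k=\sum_{y\in\mathbb{F}_q\setminus\{0\}}y^k=S,$$
so $(g^k-1)S=0$ in the field $\mathbb{F}_q$; as $g^k-1\neq0$ this forces $S=0$. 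Equivalently, one may sum the geometric series $\sum_{j=0}^{q-2}(g^k)^j=\frac{(g^{q-1})^k-1}{g^k-1}=0$.

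There is no real obstacle: this is a routine finite-field computation. The only points that need care are the reduction $q-1\equiv-1$ modulo the characteristic in the first case, and, in the second case, guaranteeing an element with $a^k\neq1$, which is precisely what the cyclicity of $\mathbb{F}_q^\times$ together with $q-1\nmid k$ provides.
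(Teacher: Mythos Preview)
The paper does not actually prove this lemma; it is simply recorded as a ``known result'' with no argument given, so there is nothing in the paper to compare your approach to. Your proof is the standard one and is correct: the case $q-1\mid k$ follows from $x^{q-1}=1$ and $|{\mathbb F}_q^\times|=q-1\equiv-1$, and the case $q-1\nmid k$ follows from the scaling/geometric-series argument using a generator.

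Two minor remarks. First, your observation that the divisor should be $q-1$ rather than $p-1$ is well taken, but your illustrative example $q=4$ does not fit the paper's standing hypothesis that $q$ is an odd prime power; a valid counterexample is $q=9$, $p=3$, $k=2$, where $p-1\mid k$ yet $\sum_{x\ne0}x^2=0$. Second, in the case $q-1\mid k$ you wrote $k=(q-1)\ell$ with $\ell\geqslant0$, but $k$ is an arbitrary integer; simply allow $\ell\in\mathbb{Z}$ (the identity $x^{(q-1)\ell}=1$ holds for all integers $\ell$).
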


Now we are in a position to prove our main results. For simplicity, the summations $\sum_{x\in\mathbb{F}_q}$ and $\sum_{x\in\mathbb{F}_q\setminus\{0\}}$ are abbreviated as $\sum_{x}$ and $\sum_{x\neq 0}$ respectively. 

{\noindent\bf Proof of Theorem \ref{Thm. A}.} Fix a primitive element $g$ of $\mathbb{F}_q$. Then one can verify that 
\begin{align*}
	\det S_q&=\prod_{i=1}^{q-1}a_i^2\cdot\det\left[\left(\left(\frac{a_j}{a_i}\right)^2+\frac{a_j}{a_i}+1\right)\chi\left(\left(\frac{a_j}{a_i}\right)^2+\frac{a_j}{a_i}+1\right)\right]_{1\le i,j\le q-1}\\
	&=\det\left[\frac{1}{g^{j-i}}\left(g^{2(j-i)}+g^{j-i}+1\right)^{\frac{q+1}{2}}\right]_{0\le i,j\le q-2}.
\end{align*}
Let $t_i=g^{-i}(g^{2i}+g^i+1)^{\frac{q+1}{2}}$ for $0\le i\le q-2$. Then 
$$\det S_q=\det C(t_0,t_1,\cdots,t_{q-2})$$
and $t_i=t_{q-1-i}$ for $1\le i\le q-3$. Applying Lemma \ref{Lem. the key lemma} there is an element $u\in\mathbb{F}_q$ such that 
\begin{equation}\label{Eq.1 in the proof of Thm. A(1)}
	\det S_q=\left(\sum_{i=0}^{q-2}t_i\right)\left(\sum_{i=0}^{q-2}(-1)^it_i\right)u^2.
\end{equation}
We first evaluate $\sum_{i=0}^{q-2}t_i$.
\begin{align*}
	\sum_{i=0}^{q-2}t_i
	&=\sum_{x\neq0}\frac{1}{x}\left(x^2+x+1\right)^{\frac{q+1}{2}}\\
	&=\sum_{x\neq0}\left(x+\frac{1}{x}+1\right)\cdot\left(x^2+x+1\right)^{\frac{q-1}{2}}\\
	&=2\sum_{x\neq0}x\cdot(x^2+x+1)^{\frac{q-1}{2}}+\sum_{x\neq0}(x^2+x+1)^{\frac{q-1}{2}}\\
	&=-1+2\sum_{x}x\left(\left(x+\frac{1}{2}\right)^2+\frac{3}{4}\right)^{\frac{q-1}{2}}+\sum_{x}\left(\left(x+\frac{1}{2}\right)^2+\frac{3}{4}\right)^{\frac{q-1}{2}}\\
	&=-1+2\sum_{x}\left(x-\frac{1}{2}\right)\left(x^2+\frac{3}{4}\right)^{\frac{q-1}{2}}+\sum_{x}\left(x^2+\frac{3}{4}\right)^{\frac{q-1}{2}}=-1.
\end{align*}
Hence we obtain 
\begin{equation}\label{Eq.2 in the proof of Thm. A(1)}
	\sum_{i=0}^{q-2}t_i=-1.
\end{equation}
Next we turn to $\sum_{i=0}^{q-2}(-1)^it_i$.
\begin{align*}
	\sum_{i=0}^{q-2}(-1)^it_i
	&=\sum_{x\neq 0}\frac{1}{x}\cdot\left(x^2+x+1\right)\cdot\chi\left(\frac{1}{x}\right)\chi\left(x^2+x+1\right)\\
	&=\sum_{x\neq 0}\left(x+\frac{1}{x}+1\right)^{\frac{q+1}{2}}=-T_{\frac{q+1}{2}}.
\end{align*}
The last equality follows from Lemma \ref{Lem. sum}. We therefore obtain 
\begin{equation}\label{Eq.3 in the proof of Thm. A(1)}
	\sum_{i=0}^{q-2}(-1)^it_i=-T_{\frac{q+1}{2}}.
\end{equation}
Combining (\ref{Eq.2 in the proof of Thm. A(1)}) and (\ref{Eq.3 in the proof of Thm. A(1)}) with (\ref{Eq.1 in the proof of Thm. A(1)}), we see that $\det S_q=T_{\frac{q+1}{2}}\cdot u^2$ for some $u\in\mathbb{F}_q$. 

This completes the proof.\qed

\section{Proof of Theorem \ref{Thm. B}}

We begin with the following known result (see \cite[Lemma 10]{K}).

\begin{lemma}\label{Lem. K}
	Let $R$ be a commutative ring and let $n$ be a positive integer. For any polynomial $P(T)=p_{n-1}T^{n-1}+\cdots+p_1T+p_0\in R[T]$ we have  
	$$\det\left[P(X_iY_j)\right]_{1\le i,j\le n}=\prod_{i=0}^{n-1}p_i\prod_{1\le i<j\le n}\left(X_j-X_i\right)\left(Y_j-Y_i\right).$$
\end{lemma}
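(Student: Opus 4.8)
\textbf{Proof proposal for Lemma \ref{Lem. K}.}
The plan is to prove the Cauchy--Binet-type factorization
$$\det\left[P(X_iY_j)\right]_{1\le i,j\le n}=\prod_{i=0}^{n-1}p_i\prod_{1\le i<j\le n}(X_j-X_i)(Y_j-Y_i)$$
by recognizing the matrix $M:=[P(X_iY_j)]$ as a product of two matrices, one built from the powers of the $X_i$ and the other from the powers of the $Y_j$, and then applying the Cauchy--Binet formula together with the classical Vandermonde determinant. First I would write out the entry explicitly: since $P(T)=\sum_{k=0}^{n-1}p_kT^k$, we have
$$P(X_iY_j)=\sum_{k=0}^{n-1}p_k\,X_i^{k}\,Y_j^{k}.$$
This is exactly the formula for the $(i,j)$-entry of a product $V_X\,D\,V_Y^{\mathsf{T}}$, where $V_X=[X_i^{k}]_{1\le i\le n,\,0\le k\le n-1}$ and $V_Y=[Y_j^{k}]_{1\le j\le n,\,0\le k\le n-1}$ are the $n\times n$ Vandermonde matrices in the $X_i$ and $Y_j$, and $D=\mathrm{diag}(p_0,p_1,\ldots,p_{n-1})$.

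With the factorization $M=V_X\,D\,V_Y^{\mathsf{T}}$ in hand, the determinant is computed by multiplicativity:
$$\det M=\det V_X\cdot\det D\cdot\det V_Y^{\mathsf{T}}=\det V_X\cdot\det V_Y\cdot\prod_{k=0}^{n-1}p_k.$$
Here I use $\det V_Y^{\mathsf{T}}=\det V_Y$ and $\det D=\prod_{k=0}^{n-1}p_k$. The two Vandermonde determinants evaluate to
$$\det V_X=\prod_{1\le i<j\le n}(X_j-X_i),\qquad \det V_Y=\prod_{1\le i<j\le n}(Y_j-Y_i),$$
which after substitution yields precisely the claimed identity. Note that all three factor matrices are genuinely $n\times n$ since $P$ has degree at most $n-1$, so no rectangular Cauchy--Binet sum is actually needed; the single product $V_X D V_Y^{\mathsf{T}}$ already has the right shape.

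One subtle point worth flagging is the distinction between working over a general commutative ring $R$ and over a field. The factorization $M=V_X D V_Y^{\mathsf{T}}$ and the determinant multiplicativity $\det(AB)=\det A\det B$ are purely polynomial identities valid over any commutative ring, and the Vandermonde determinant formula is likewise a universal polynomial identity; hence no invertibility hypotheses are required and the lemma holds over $R$. The main (very mild) obstacle is simply to verify the matrix factorization at the level of entries and to be careful that the indexing of the Vandermonde columns runs over $k=0,1,\ldots,n-1$ so that the diagonal $D$ carries exactly the coefficients $p_0,\ldots,p_{n-1}$; once the entrywise identity $(V_X D V_Y^{\mathsf{T}})_{ij}=\sum_k p_k X_i^k Y_j^k=P(X_iY_j)$ is confirmed, the rest is a direct application of standard determinant facts.
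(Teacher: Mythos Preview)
Your proof is correct. The paper does not supply its own proof of this lemma but simply cites it as a known result from Krattenthaler \cite[Lemma~10]{K}; your argument via the factorization $M=V_X\,D\,V_Y^{\mathsf T}$ and the Vandermonde determinant is exactly the standard proof (and indeed the one given in \cite{K}).
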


We also need the following lemma.

\begin{lemma}\label{Lem. values of polynomial}
	Let $q$ be an odd prime. Then for any non-zero element $a\in\mathbb{F}_q$ we have $(a^2+a+1)^{\frac{q+1}{2}}=f(a)$, where 
	\begin{equation}\label{Eq. def of f}
	f(T)=\frac{11}{8}+T+\frac{11}{8}T^2+\sum_{k=-(q-5)/2}^{(q-5)/2}\binom{\frac{q+1}{2}}{k}_2T^{k+\frac{q+1}{2}}
	\end{equation}
is a polynomial over $\mathbb{F}_q$. 
\end{lemma}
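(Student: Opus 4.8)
The plan is to express $(a^2+a+1)^{(q+1)/2}$ in terms of the trinomial coefficients $\binom{n}{k}_2$ with $n=(q+1)/2$, and then reduce the exponents modulo $q-1$ using Fermat's little theorem so that the result becomes a genuine polynomial of degree at most $q-2$. Concretely, for nonzero $a\in\mathbb F_q$ we have $a^2+a+1 = a(a+1+a^{-1})$, so
\begin{align*}
(a^2+a+1)^{\frac{q+1}{2}} &= a^{\frac{q+1}{2}}\left(a+1+a^{-1}\right)^{\frac{q+1}{2}} = a^{\frac{q+1}{2}}\sum_{k=-(q+1)/2}^{(q+1)/2}\binom{\frac{q+1}{2}}{k}_2 a^k.
\end{align*}
Thus, writing $n=(q+1)/2$, we have $(a^2+a+1)^n = \sum_{k=-n}^{n}\binom{n}{k}_2 a^{k+n}$, a Laurent polynomial whose exponents $k+n$ range over $0,1,\dots,2n = q+1$.

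The second step is to collapse this Laurent polynomial to a polynomial of degree at most $q-2$ by using that $a^{q-1}=1$ for all nonzero $a$, i.e. $a^{q}=a$ and $a^{q+1}=a^2$. Among the exponents $k+n$ for $k\in\{-n,\dots,n\}$, only three exceed $q-2$, namely those equal to $q-1$, $q$, $q+1$ (coming from $k = n-1$, $k=n$, $k=n+1$ — but note $k$ only runs to $n$, so the relevant ones are $k=n-2,n-1,n$ giving exponents $q-1,q,q+1$, wait — let me recount: $k+n = q-1 \iff k = q-1-n = \tfrac{q-3}{2} = n-2$; $k+n=q \iff k=n-1$; $k+n=q+1\iff k=n$). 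Reducing: the term with exponent $q-1$ becomes a constant, the term with exponent $q$ becomes linear in $a$, and the term with exponent $q+1$ becomes quadratic in $a$. Collecting these contributions with the surviving low-degree terms (exponents $0,1,2$ from $k=-n,-n+1,-n+2$) and using the symmetry $\binom{n}{k}_2 = \binom{n}{-k}_2$, one checks that the constant term, the linear coefficient, and the quadratic coefficient each become an explicit sum of three trinomial coefficients $\binom{n}{j}_2$ for small $j$. The remaining terms, with exponents $k+n$ for $-(q-5)/2 \le k \le (q-5)/2$, are already in range and contribute the sum displayed in \eqref{Eq. def of f}.

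The main work, and the one genuinely computational point, is to verify that these three collapsed coefficients equal $\tfrac{11}{8}$, $1$, $\tfrac{11}{8}$ respectively. For the linear coefficient this is straightforward: the contributions are $\binom{n}{-n+1}_2 + \binom{n}{n}_2 = \binom{n}{n-1}_2 + \binom{n}{n}_2$, and since $\binom{n}{n}_2 = 1$ and $\binom{n}{n-1}_2 = n$ (the coefficient of $x^{n-1}$ in $(x+1+x^{-1})^n$, equivalently of $x^{2n-1}$ in $(x^2+x+1)^n$, which is $n$), we get $n+1 = \tfrac{q+3}{2}$; but in $\mathbb F_q$ this equals $\tfrac{q+3}{2} = \tfrac{3}{2} + \tfrac{q}{2}$... hmm, here one uses $q\equiv 0$ in $\mathbb F_q$ only if $q=p$ — wait, the lemma is stated for $q$ an odd prime, so $q=p$ and $q\equiv 0\pmod p$, giving linear coefficient $\equiv \tfrac{p+3}{2} + (\text{correction from }k=n+1)$. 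Actually $k=n$ gives exponent $q+1$, not $q$; one must be careful that $k$ ranges only to $n=(q+1)/2$, so exponent $q = k+n$ forces $k=(q-1)/2 = n-1$, and exponent $q+1$ forces $k=n$. So the linear coefficient gets contributions from the genuine exponent-$1$ term ($k+n=1$, i.e. $k=1-n$) plus the reduced exponent-$q$ term ($k=n-1$): that is $\binom{n}{1-n}_2 + \binom{n}{n-1}_2 = 2\binom{n}{n-1}_2 = 2n = q+1 \equiv 1 \pmod p$. Similarly the constant term gets $\binom{n}{-n}_2 + \binom{n}{n-2}_2$ (from exponents $0$ and $q-1$) and the quadratic coefficient gets $\binom{n}{2-n}_2 + \binom{n}{n}_2 + \binom{n}{n-3}_2$ or similar; evaluating $\binom{n}{n}_2=1$, $\binom{n}{n-1}_2 = n$, $\binom{n}{n-2}_2 = \binom{n}{2} + n$ (the two ways to pick exponents summing appropriately), and reducing mod $p$ with $q\equiv 0$, $n\equiv \tfrac12$, should produce exactly $\tfrac{11}{8}$ and $\tfrac{11}{8}$. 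The hard part is bookkeeping: correctly identifying which values of $k$ land on each out-of-range exponent, writing each $\binom{n}{k}_2$ for $k$ near $\pm n$ as a small polynomial in $n$, and checking the arithmetic in $\mathbb F_q = \mathbb F_p$ using $q\equiv 0$ and $\tfrac{q+1}{2}\equiv\tfrac12$. I would organize this as: (i) state the Laurent expansion; (ii) tabulate the finitely many exponents $\ge q-1$ and their reductions; (iii) compute $\binom{n}{n}_2,\binom{n}{n-1}_2,\binom{n}{n-2}_2$ explicitly via the generating function; (iv) assemble and simplify the three coefficients mod $p$. Once these three values check out, the identity $(a^2+a+1)^{(q+1)/2}=f(a)$ holds for every nonzero $a$, which is the assertion.
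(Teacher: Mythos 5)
Your approach is essentially the paper's own: expand $(a^2+a+1)^{\frac{q+1}{2}}=\sum_{k=-n}^{n}\binom{n}{k}_2a^{k+n}$ with $n=\frac{q+1}{2}$, fold the three out-of-range exponents $q-1,q,q+1$ (coming from $k=n-2,n-1,n$) back onto degrees $0,1,2$ using $a^{q-1}=1$, and evaluate the boundary coefficients $\binom{n}{n}_2=1$, $\binom{n}{n-1}_2=n\equiv\frac12$, $\binom{n}{n-2}_2=\frac{n(n+1)}{2}\equiv\frac38$ in $\mathbb{F}_q$ (here $q=p$, so $q\equiv 0$), exactly as the paper does. The one loose end is your hedged quadratic coefficient: it receives exactly two contributions, $\binom{n}{2-n}_2+\binom{n}{n}_2=\frac38+1=\frac{11}{8}$ (there is no $\binom{n}{n-3}_2$ term, since only $k=n$ lands on exponent $q+1$), mirroring the constant term $\binom{n}{-n}_2+\binom{n}{n-2}_2=\frac{11}{8}$; your linear-coefficient computation $2\binom{n}{n-1}_2=q+1\equiv 1$ is correct as written (and is in fact the correct form of the paper's intermediate display, whose grouping of the linear term contains a typo but yields the same final value).
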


\begin{proof} As $a\neq0$, we have $a^{q+k}=a^{k+1}$ for any integer $k$. Using this and $\binom{n}{k}_2=\binom{n}{-k}_2$ one can verify that $(a^2+a+1)^{\frac{q+1}{2}}$ is equal to
\begin{align*}
&\sum_{k=-(q-5)/2}^{(q-5)/2}\binom{\frac{q+1}{2}}{k}_2a^{k+\frac{q+1}{2}}
+\left(\binom{\frac{q+1}{2}}{\frac{q+1}{2}}_2+\binom{\frac{q+1}{2}}{\frac{q-3}{2}}_2\right)(1+a^2)+\left(\binom{\frac{q+1}{2}}{\frac{q-1}{2}}_2+\binom{\frac{q+1}{2}}{\frac{q-3}{2}}_2\right)a\\
=&\frac{11}{8}+a+\frac{11}{8}a^2+\sum_{k=-(q-5)/2}^{(q-5)/2}\binom{\frac{q+1}{2}}{k}_2a^{k+\frac{q+1}{2}}.
\end{align*}
The last equality follows from (below the trinomial coefficient $\binom{n}{k}_2$ is viewed as an element of $\mathbb{F}_q$).
$$\binom{\frac{q+1}{2}}{\frac{q+1}{2}}_2=1,\  \binom{\frac{q+1}{2}}{\frac{q-1}{2}}_2=\frac{q+1}{2}=\frac{1}{2},\ \binom{\frac{q+1}{2}}{\frac{q-3}{2}}_2=\frac{1}{2}\cdot\frac{q+1}{2}\cdot\frac{q+3}{2}=\frac{3}{8}.$$

This completes the proof.
\end{proof}
Now we are in a position to prove our last result.

{\noindent\bf Proof of Theorem \ref{Thm. B}.} By Lemma \ref{Lem. values of polynomial} one can verify that 
\begin{align*}
\det S_q&=\prod_{i=1}^{q-1}a_i^{q+1}\cdot\det\left[\left(\left(\frac{a_j}{a_i}\right)^2+\frac{a_j}{a_i}+1\right)^{\frac{q+1}{2}}\right]_{1\le i,j\le q-1}\\
&=\det\left[\left(\left(\frac{a_j}{a_i}\right)^2+\frac{a_j}{a_i}+1\right)^{\frac{q+1}{2}}\right]_{1\le i,j\le q-1}\\
&=\det\left[f\left(\frac{a_j}{a_i}\right)\right]_{1\le i,j\le q-1},
\end{align*}
where $f$ is defined by (\ref{Eq. def of f}).

Now applying Lemma \ref{Lem. K} we obtain that $\det S_q$ is equal to 
\begin{equation}\label{Eq.1 in the proof of Thm. B}
\det\left[f\left(\frac{a_j}{a_i}\right)\right]_{1\le i,j\le q-1}=\frac{121}{64}\cdot \binom{\frac{q+1}{2}}{0}_2\cdot\prod_{k=1}^{(p-5)/2}\binom{\frac{q+1}{2}}{k}_2^2\cdot\prod_{1\le i<j\le q-1}\left(a_j-a_i\right)\left(\frac{1}{a_j}-\frac{1}{a_i}\right).	
\end{equation}
By \cite[Eq. (3.3)]{WSN} we further have 
\begin{equation}\label{Eq.2 in the proof of Thm. B}
	\prod_{1\le i<j\le q-1}\left(a_j-a_i\right)\left(\frac{1}{a_j}-\frac{1}{a_i}\right)=1.
\end{equation}
Hence by (\ref{Eq.1 in the proof of Thm. B}) and (\ref{Eq.2 in the proof of Thm. B}) we obtain 
\begin{equation}\label{Eq.3 in the proof of Thm. B}
	\det S_q=\frac{121}{64}\cdot \binom{\frac{q+1}{2}}{0}_2\cdot\prod_{k=1}^{(p-5)/2}\binom{\frac{q+1}{2}}{k}_2^2.
\end{equation}
As $q>5$ and $(q,22)=1$, by (\ref{Eq.3 in the proof of Thm. B}) we see that 
$$\det S_q= 0\Leftrightarrow \binom{\frac{q+1}{2}}{k}_2\equiv 0\pmod p\ \text{for some}\ 0\le k\le (p-5)/2,$$
where $p$ is the characteristic of $\mathbb{F}_q$. 
This completes the proof.\qed

\end{document}